\documentclass[11pt]{article}

\usepackage[a4paper,margin=1in]{geometry}
\usepackage{amsmath,amssymb,amsthm,amsfonts}
\usepackage{mathtools}
\usepackage[colorlinks=true,linkcolor=blue,citecolor=blue,urlcolor=blue]{hyperref}

\newtheorem{theorem}{Theorem}[section]
\newtheorem{lemma}[theorem]{Lemma}

\newtheorem{corollary}[theorem]{Corollary}

\theoremstyle{definition}
\newtheorem{definition}[theorem]{Definition}

\theoremstyle{remark}
\newtheorem{remark}[theorem]{Remark}

\newcommand{\IS}{\operatorname{IS}}
\newcommand{\WIS}{\operatorname{WIS}}
\newcommand{\lub}{\operatorname{lub}}
\newcommand{\succop}{\operatorname{succ}}

\title{
Bourbaki--Zorn Normal Forms for Maximality Arguments
}

\author{You-Chang Liu}

\date{}

\begin{document}

\maketitle

\begin{abstract}
We isolate a normal-form mechanism underlying Bourbaki--Witt fixed-point arguments and
least-upper-bound versions of Zorn-type maximality principles. Given a progressive
self-map on a partially ordered set, we define a Bourbaki tower as a well-ordered
trajectory whose successor stages are generated by the map and whose limit stages are
given by least upper bounds of earlier stages. We prove that least upper bounds for
nonempty well-ordered subsets are sufficient to force a fixed point for every progressive
self-map. Thus the fixed-point statement is obtained under a weaker completeness
hypothesis than the usual chain-complete form of the Bourbaki--Witt theorem.

The proof proceeds by constructing a largest Bourbaki tower. The least upper bound of
this largest tower belongs to the tower itself and is a fixed point of the map. As a
consequence, strictly progressive self-maps cannot exist in such posets. Combining this
obstruction with a choice selector on strict upper cones yields a concise maximality
principle: if every nonempty well-ordered subset has a least upper bound, then the poset
has a maximal element.

The contribution is methodological rather than axiomatic. The paper makes explicit a
reusable proof architecture connecting well-ordered Bourbaki--Witt fixed points, strict
progression obstructions, and least-upper-bound versions of Zorn-type maximality
arguments.
\end{abstract}

\tableofcontents

\section{Introduction}

The Bourbaki--Witt fixed-point theorem is often formulated for partially ordered sets in
which every chain has a least upper bound. Under that hypothesis, every progressive
self-map has a fixed point. Zorn's Lemma, by contrast, is usually presented as a
maximality principle: if every chain in a partially ordered set has an upper bound, then
the poset has a maximal element. Although these principles are closely related, their
proofs are often written in different languages.

The purpose of this paper is to isolate a common normal form behind these arguments. The
main fixed-point result uses only least upper bounds of nonempty well-ordered subsets,
rather than least upper bounds of all chains. This is enough because the trajectories
generated by the tower construction are well-ordered. Thus the fixed-point theorem proved
below may be viewed as a well-ordered-subset refinement of the usual chain-complete
Bourbaki--Witt theorem.

The central object is a Bourbaki tower: a well-ordered trajectory generated by a
progressive map, whose successor stages are determined by the map and whose limit stages
are determined by least upper bounds of earlier stages. The tower construction makes
explicit the transfinite mechanism by which a progressive process is forced to terminate:
one iterates the map at successor stages and takes least upper bounds at limit stages.

The main observation is that maximality can be reformulated as the impossibility of a
strictly progressive self-map. If a poset has no maximal element, a choice function selects,
above each element, a strictly larger successor. This gives a strictly progressive self-map.
But the fixed-point theorem proved below forces every progressive self-map to have a fixed point,
contradicting strict progressiveness. Hence a maximal element must exist.

The novelty of the formulation lies in isolating the Bourbaki tower as a canonical normal
form for fixed-point and maximality arguments. In this perspective, fixed points arise
from the forced termination of well-ordered progressive trajectories, while maximality
arises from the impossibility of strict progression under the same least-upper-bound
hypothesis.

\subsection*{Contribution}

The contribution of this paper is to isolate a well-ordered tower normal form for
Bourbaki--Witt type fixed-point arguments and least-upper-bound maximality principles.
The fixed-point theorem is proved using least upper bounds only for nonempty
well-ordered subsets, rather than for all chains. This reflects the fact that the
trajectories generated by the construction are themselves well-ordered.

The same tower mechanism also gives a Zorn-type maximality principle under a
least-upper-bound hypothesis. Absence of maximal elements produces a strictly progressive
self-map, while the tower theorem forces every progressive self-map to have a fixed point.
Thus maximality is obtained as a fixed-point obstruction to strict progression.

\section{Preliminaries}

We begin with the order-theoretic terminology used throughout the paper.

\begin{definition}[Partially ordered set]
A \emph{partially ordered set}, or \emph{poset}, is a set \(X\) equipped with a relation
\(\leq\) satisfying:
\begin{enumerate}
\item \(x\leq x\) for all \(x\in X\);
\item if \(x\leq y\) and \(y\leq x\), then \(x=y\);
\item if \(x\leq y\) and \(y\leq z\), then \(x\leq z\).
\end{enumerate}
We write \(x<y\) to mean \(x\leq y\) and \(x\neq y\).
\end{definition}

\begin{definition}[Well-ordered subset]
Let \((X,\leq)\) be a poset. A subset \(Y\subseteq X\) is called \emph{well-ordered}
if the order induced from \(X\) is total on \(Y\) and every nonempty subset of \(Y\) has a
least element.
\end{definition}

\begin{definition}[Initial segment determined by an element]
Let \(Y\) be a well-ordered set and let \(y\in Y\). The \emph{initial segment} of \(Y\)
determined by \(y\) is
\[
\IS_Y(y)=\{z\in Y\mid z<y\}.
\]
The \emph{weak initial segment} of \(Y\) determined by \(y\) is
\[
\WIS_Y(y)=\{z\in Y\mid z\leq y\}.
\]
\end{definition}

\begin{definition}[Initial segment of a well-ordered subset]
Let \(A\) and \(B\) be well-ordered subsets of a poset \((X,\leq)\). We say that
\(A\) is an \emph{initial segment} of \(B\) if \(A\subseteq B\) and, whenever
\(a\in A\) and \(b\in B\) satisfy
\[
b<a,
\]
one has \(b\in A\).

If \(A\neq B\), then \(A\) is called a \emph{proper initial segment} of \(B\).
\end{definition}

\begin{definition}[Successor]
Let \(Y\) be a well-ordered set and let \(y\in Y\). If the set
\[
\{z\in Y\mid y<z\}
\]
is nonempty, its least element is called the \emph{successor} of \(y\) in \(Y\), denoted
\[
\succop_Y(y).
\]
If this set is empty, then \(y\) has no successor in \(Y\); equivalently, \(y\) is the largest
element of \(Y\).
\end{definition}

\begin{definition}[Successor and limit elements]
Let \(Y\) be a well-ordered set and let \(y\in Y\). We say that \(y\) is a
\emph{successor element} of \(Y\) if there exists \(z\in Y\) such that
\[
y=\succop_Y(z).
\]
An element \(y\in Y\) is called a \emph{limit element} of \(Y\) if \(y\neq \min Y\) and
\(y\) is not a successor element of \(Y\).
\end{definition}

\begin{definition}[Least upper bound]
Let \((X,\leq)\) be a poset and let \(A\subseteq X\). An element \(u\in X\) is called a
\emph{least upper bound} of \(A\), written
\[
u=\lub_X(A),
\]
if:
\begin{enumerate}
\item \(a\leq u\) for all \(a\in A\);
\item whenever \(v\in X\) satisfies \(a\leq v\) for all \(a\in A\), one has \(u\leq v\).
\end{enumerate}
\end{definition}

\begin{remark}[Transfinite terminology]
In this paper, a \emph{transfinite} construction means a construction carried out along a
well-ordered set of stages, not merely along the natural numbers. Thus a tower may begin
with ordinary successor steps
\[
x_0,\quad f(x_0),\quad f^2(x_0),\quad \ldots,
\]
but it may also have limit stages. At a limit stage, there is no immediate predecessor;
instead, the new element is defined as the least upper bound of all earlier elements. In a
Bourbaki tower this means that, for a limit element \(y\),
\[
y=\lub_X(\IS_Y(y)).
\]
Thus ``transfinite'' refers to the use of both successor stages and limit stages in a
well-ordered construction.
\end{remark}

\section{Progressive Maps and Bourbaki Towers}

The central object in this paper is a well-ordered trajectory generated by a progressive
self-map.

\begin{definition}[Progressive map]
Let \((X,\leq)\) be a partially ordered set. A map
\[
f:X\to X
\]
is called \emph{progressive} if
\[
x\leq f(x)
\qquad\text{for all }x\in X.
\]
It is called \emph{strictly progressive} if
\[
x<f(x)
\qquad\text{for all }x\in X.
\]
\end{definition}

\begin{definition}[Bourbaki tower]
Let \((X,\leq)\) be a partially ordered set, let \(f:X\to X\) be progressive, and fix
\(x_0\in X\).

A subset \(Y\subseteq X\) is called a \emph{Bourbaki \(f\)-tower based at \(x_0\)} if the
following conditions hold:
\begin{enumerate}
\item \(Y\) is well-ordered by the order induced from \(X\), and \(x_0\) is the least
element of \(Y\);

\item if \(y\in Y\) is not the largest element of \(Y\), then its successor in \(Y\) is
given by
\[
\succop_Y(y)=f(y);
\]

\item if \(y\in Y\) is a limit element of \(Y\), then
\[
y=\lub_X(\IS_Y(y)).
\]
\end{enumerate}
\end{definition}

\begin{remark}
Condition (2) says that successor stages are generated by the map \(f\). Condition (3)
says that genuine limit stages are determined canonically by least upper bounds of all
earlier stages. Thus a Bourbaki tower is a well-ordered trajectory of \(f\), enlarged at
limit stages by the least upper bounds forced by the order structure of \(X\).
\end{remark}

\section{The Tower Comparison Lemma}

The key technical fact is that two Bourbaki towers based at the same initial point cannot
branch. They must agree on their common part, and one tower must be an initial segment
of the other.

\begin{lemma}[Tower comparison lemma]
Let \((X,\leq)\) be a poset, let \(f:X\to X\) be progressive, and let \(Y,Y'\) be Bourbaki
\(f\)-towers based at the same element \(x_0\). Then one of \(Y\) and \(Y'\) is an initial
segment of the other.
\end{lemma}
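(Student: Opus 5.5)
We plan to use the standard ``common initial segment'' argument. Let
\[
D=\{\,y\in Y\cap Y' \mid \WIS_Y(y)=\WIS_{Y'}(y)\,\},
\]
the set of points up to which the two towers agree. First we check that \(D\) is an initial segment of both \(Y\) and \(Y'\). If \(y\in D\) and \(z\in Y\) with \(z<y\), then \(z\in\WIS_Y(y)=\WIS_{Y'}(y)\), so \(z\in Y'\). Moreover \(\WIS_Y(z)=\WIS_Y(y)\cap\{w\mid w\leq z\}=\WIS_{Y'}(z)\), so \(z\in D\). The same argument works with \(Y'\) in place of \(Y\). Note also that \(x_0\in D\), since both weak initial segments of \(x_0\) equal \(\{x_0\}\).

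The claim is that \(D=Y\) or \(D=Y'\). Then \(D\) equals one of the towers and is an initial segment of the other, which is the statement. Suppose instead that \(D\neq Y\) and \(D\neq Y'\). Let \(a=\min(Y\setminus D)\) and \(a'=\min(Y'\setminus D)\). Because \(D\) is an initial segment of \(Y\) and \(Y\) is totally ordered, \(D=\IS_Y(a)\); likewise \(D=\IS_{Y'}(a')\). We aim to show \(a=a'\). This gives \(\WIS_Y(a)=D\cup\{a\}=\WIS_{Y'}(a')\), so \(a\in D\), a contradiction. There are three cases according to the type of \(a\) in \(Y\).

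\textbf{Case 1: \(a=\min Y\).} Then \(a=x_0\), but \(x_0\in D\), so this case cannot occur.

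\textbf{Case 2: \(a\) is a successor, \(a=\succop_Y(z)\).} Then \(z\in D\) is the largest element of \(D\), and condition (2) gives \(a=f(z)\). In \(Y'\), the element \(z\) is also the largest element of \(D=\IS_{Y'}(a')\), so \(a'\) is the least element of \(Y'\) above \(z\), that is, \(a'=\succop_{Y'}(z)=f(z)=a\).

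\textbf{Case 3: \(a\) is a limit element.} Then \(D\) has no largest element. Indeed, if \(m\) were the largest element of \(D\), then \(a\) would be the least element of \(Y\) above \(m\), so \(a=\succop_Y(m)\), making \(a\) a successor. Hence \(a'\) is not a successor in \(Y'\), since a successor of \(z'\) would force \(z'=\max D\). Also \(a'\neq x_0\). So \(a'\) is a limit element of \(Y'\). Condition (3) then gives \(a=\lub_X(D)=a'\), because least upper bounds are unique by antisymmetry.

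The main obstacle is the bookkeeping in Cases 2 and 3. One has to translate ``\(a\) is a successor (resp.\ limit) in \(Y\)'' into the intrinsic property ``\(D\) has (resp.\ lacks) a largest element''. That intrinsic property can then be transferred to \(a'\) in \(Y'\), which requires checking that \(D=\IS_Y(a)=\IS_{Y'}(a')\) exactly. Progressiveness of \(f\) is not needed in this lemma; only conditions (1)--(3) of a Bourbaki tower are used.
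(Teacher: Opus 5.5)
Your proof is correct and takes essentially the same route as the paper. You use the same agreement set (your $D$, the paper's $V$), the same check that it is an initial segment of both towers, and the same successor/limit dichotomy at the first point of disagreement, with uniqueness of least upper bounds settling the limit case. The only difference is organizational (the paper splits first on whether $V$ has a largest element, while you assume both complements are nonempty and split on the type of $a$), and your remark that progressiveness of $f$ is never used is accurate.
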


\begin{proof}
Define
\[
V=\{y\in Y\cap Y'\mid \WIS_Y(y)=\WIS_{Y'}(y)\}.
\]
Thus \(V\) is the set of common elements up to which the two towers have exactly the
same weak initial segment. Since both towers are based at \(x_0\), we have \(x_0\in V\).
Hence \(V\neq\varnothing\).

We first observe that \(V\) is an initial segment of both \(Y\) and \(Y'\). Indeed, suppose
\(y\in V\) and \(u\in Y\) satisfy \(u<y\). Since
\[
\WIS_Y(y)=\WIS_{Y'}(y),
\]
we have \(u\in Y'\). Moreover, the weak initial segments determined by \(u\) also agree.
Indeed, if \(t\in\WIS_Y(u)\), then \(t\leq u<y\), so
\[
t\in\WIS_Y(y)=\WIS_{Y'}(y),
\]
and hence \(t\in\WIS_{Y'}(u)\). The reverse inclusion is proved in the same way. Thus
\[
\WIS_Y(u)=\WIS_{Y'}(u),
\]
so \(u\in V\). The same argument with \(Y\) and \(Y'\) interchanged shows that \(V\) is
also an initial segment of \(Y'\).

We now distinguish two cases.

\medskip

\noindent
\emph{Case 1: \(V\) has a largest element \(v\).}

If \(v\) is the largest element of \(Y\), then
\[
Y=\WIS_Y(v)=\WIS_{Y'}(v),
\]
so \(Y\) is an initial segment of \(Y'\). Similarly, if \(v\) is the largest element of
\(Y'\), then \(Y'\) is an initial segment of \(Y\).

Assume, therefore, that \(v\) is not the largest element of either tower. By the successor
condition for Bourbaki towers,
\[
\succop_Y(v)=f(v)=\succop_{Y'}(v).
\]
Since a successor is strictly larger than the element whose successor it is, we have
\[
v<f(v).
\]
Furthermore,
\[
\WIS_Y(f(v))
=
\WIS_Y(v)\cup\{f(v)\}
=
\WIS_{Y'}(v)\cup\{f(v)\}
=
\WIS_{Y'}(f(v)).
\]
Thus \(f(v)\in V\), contradicting the maximality of \(v\) in \(V\). Hence, in this case,
one of \(Y\) and \(Y'\) is an initial segment of the other.

\medskip

\noindent
\emph{Case 2: \(V\) has no largest element.}

We claim that at least one of \(Y\) and \(Y'\) is equal to \(V\). Suppose, for
contradiction, that neither is equal to \(V\). Let \(y\) be the least element of
\(Y\setminus V\), and let \(y'\) be the least element of \(Y'\setminus V\). Since \(V\) is
an initial segment of both towers and has no largest element, we have
\[
V=\IS_Y(y)=\IS_{Y'}(y').
\]

The element \(y\) cannot be a successor element of \(Y\). For if
\[
y=\succop_Y(u)
\]
for some \(u\in Y\), then \(u<y\), hence \(u\in V\). Since \(V\) has no largest element,
there exists \(w\in V\) such that \(u<w\). But \(w\in V=\IS_Y(y)\), so
\[
u<w<y,
\]
contradicting the fact that \(y\) is the successor of \(u\) in \(Y\). Hence \(y\) is a
limit element of \(Y\). Similarly, \(y'\) is a limit element of \(Y'\).

By the limit condition for Bourbaki towers,
\[
y=\lub_X(\IS_Y(y))
\qquad\text{and}\qquad
y'=\lub_X(\IS_{Y'}(y')).
\]
Since
\[
\IS_Y(y)=V=\IS_{Y'}(y'),
\]
we obtain
\[
y=\lub_X(V)=y'.
\]
Therefore
\[
\WIS_Y(y)=V\cup\{y\}=V\cup\{y'\}=\WIS_{Y'}(y'),
\]
so \(y\in V\), contradicting \(y\in Y\setminus V\).

Thus at least one of \(Y\) and \(Y'\) is equal to \(V\). Since \(V\) is an initial segment
of both towers, one of \(Y\) and \(Y'\) is an initial segment of the other.
\end{proof}

\section{The Well-Ordered Bourbaki--Witt Normal Form}

We now prove the main structural theorem. It is a well-ordered-subset version of the
Bourbaki--Witt fixed-point argument: instead of assuming least upper bounds for all
chains, it only uses least upper bounds for the well-ordered trajectories generated by the
construction.

\begin{theorem}[Well-ordered Bourbaki--Witt normal form]
Let \((X,\leq)\) be a nonempty partially ordered set such that every nonempty
well-ordered subset of \(X\) admits a least upper bound in \(X\). Let \(f:X\to X\) be
progressive, and fix \(x_0\in X\).

Then there exists a largest Bourbaki \(f\)-tower based at \(x_0\), denoted
\[
\Omega_f(x_0).
\]
Moreover, if
\[
\omega=\lub_X(\Omega_f(x_0)),
\]
then
\[
\omega\in \Omega_f(x_0)
\]
and
\[
f(\omega)=\omega.
\]
In particular, every progressive self-map on \(X\) has a fixed point.
\end{theorem}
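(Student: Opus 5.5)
The plan is to take \(\Omega_f(x_0)\) to be the union of all Bourbaki \(f\)-towers based at \(x_0\). This family is nonempty, since \(\{x_0\}\) is a tower: it is well-ordered with least element \(x_0\), has no non-largest elements, and has no limit elements. By the Tower comparison lemma, any two towers in the family are comparable under the initial-segment relation. I will first check that the union \(\Omega\) is again a tower; it is then automatically the largest one.

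\emph{Step 1: each tower \(Y\) in the family is an initial segment of \(\Omega\).} Let \(y\in Y\) and \(z\in\Omega\) with \(z<y\), and choose a tower \(Y'\) with \(z\in Y'\). If \(Y'\subseteq Y\), then \(z\in Y\) directly. If \(Y\) is an initial segment of \(Y'\), then \(z\in Y\) because \(y\in Y\), \(z\in Y'\) and \(z<y\).

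\emph{Step 2: \(\Omega\) is totally ordered and well-ordered, with least element \(x_0\).} Totality holds because any two elements lie together in the larger of two comparable towers. For well-ordering, take a nonempty \(S\subseteq\Omega\) and pick \(s\in S\cap Y\) for some tower \(Y\). By Step 1, every element of \(S\) below \(s\) lies in \(Y\). Hence the least element of \(S\cap Y\) is the least element of \(S\). The same argument shows \(x_0=\min\Omega\).

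\emph{Step 3: the successor and limit conditions hold in \(\Omega\).} Step 1 gives \(\IS_\Omega(y)=\IS_Y(y)\) for \(y\in Y\). Consequently \(y\) is a limit element of \(\Omega\) if and only if it is a limit element of \(Y\), so the lub condition transfers. For the successor condition, suppose \(y\in Y\) is not largest in \(\Omega\), say \(y<z\) with \(z\in Y'\). If \(y\) is not largest in \(Y\), then \(\succop_Y(y)=f(y)\), and this is also \(\succop_\Omega(y)\), because \(Y\) is an initial segment of \(\Omega\). If \(y\) is largest in \(Y\), then \(Y'\not\subseteq Y\), so \(Y\) is a proper initial segment of \(Y'\). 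In that case \(y\) is not largest in \(Y'\), and we argue in \(Y'\) instead.

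\emph{Step 4: the fixed point.} By hypothesis \(\omega=\lub_X(\Omega)\) exists. Suppose \(\omega\notin\Omega\). Then \(\Omega\cup\{\omega\}\) is a tower with \(\omega\) as a new top element. The new element \(\omega\) is either a successor or a limit element:
\begin{itemize}
\item If \(\Omega\) has a largest element \(m\), then \(\omega=m\in\Omega\), contradicting \(\omega\notin\Omega\), so this case does not occur.
\item Otherwise \(\omega\) is a limit element, and \(\omega=\lub_X(\IS(\omega))\) holds by definition.
\end{itemize}
The old elements keep their successors and initial segments, so \(\Omega\cup\{\omega\}\) satisfies all three tower conditions. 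This contradicts the maximality of \(\Omega\). Hence \(\omega\in\Omega\), and \(\omega\) is the largest element of \(\Omega\). Now suppose \(f(\omega)\neq\omega\). Since \(f\) is progressive, \(\omega<f(\omega)\). Then \(\Omega\cup\{f(\omega)\}\) is a tower, with the new successor condition \(\succop(\omega)=f(\omega)\) and no new limit elements. This again contradicts maximality, so \(f(\omega)=\omega\). Since \(X\) is nonempty, some \(x_0\in X\) exists, which gives the final clause.

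The main obstacle I expect is the bookkeeping in Steps 1 and 3. I must make sure that successors and initial segments computed in a single tower coincide with those computed in the union. This hinges on the initial-segment property established in Step 1, together with the careful case where \(y\) is the top element of \(Y\) but not of \(\Omega\).
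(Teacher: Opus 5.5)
Your proposal is correct and follows essentially the same route as the paper. You take $\Omega_f(x_0)$ to be the union of all towers, use the Tower Comparison Lemma to verify well-ordering and the successor and limit conditions, show $\omega\in\Omega_f(x_0)$ by adjoining $\omega$ as a limit stage, and obtain $f(\omega)=\omega$ by adjoining $f(\omega)$ as a successor stage; your preliminary Step 1 (each tower is an initial segment of the union, hence $\IS_{\Omega}(y)=\IS_Y(y)$) neatly packages the comparability arguments that the paper repeats inline in each step.
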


\begin{proof}
Let \(\mathcal T\) be the collection of all Bourbaki \(f\)-towers based at \(x_0\). The
singleton set \(\{x_0\}\) belongs to \(\mathcal T\), so \(\mathcal T\neq\varnothing\).

By the Tower Comparison Lemma, any two elements of \(\mathcal T\) are comparable by
the initial-segment relation. Define
\[
\Omega_f(x_0)=\bigcup_{Y\in\mathcal T}Y.
\]

We verify the required properties in stages.

\begin{enumerate}
\item \emph{\(\Omega_f(x_0)\) is well-ordered.}

Since any two members of \(\mathcal T\) are comparable by the initial-segment relation,
the orders on the towers are compatible. Hence the order induced from \(X\) is total on
\(\Omega_f(x_0)\).

Let \(A\subseteq \Omega_f(x_0)\) be nonempty. Choose \(a\in A\), and choose
\(Y_a\in\mathcal T\) such that \(a\in Y_a\). Since \(A\cap Y_a\) is a nonempty subset of
the well-ordered set \(Y_a\), it has a least element; call it \(a_0\).

We claim that \(a_0\) is the least element of \(A\). Let \(b\in A\). If \(b\in Y_a\), then
\(a_0\leq b\) by the definition of \(a_0\). If \(b\notin Y_a\), choose
\(Y_b\in\mathcal T\) such that \(b\in Y_b\). By the Tower Comparison Lemma, \(Y_a\) and
\(Y_b\) are comparable by the initial-segment relation. Since \(b\notin Y_a\), the only
possible case is that \(Y_a\) is a proper initial segment of \(Y_b\). Hence every element
of \(Y_a\) is strictly below \(b\). In particular,
\[
a_0<b.
\]
Thus \(a_0\leq b\) for every \(b\in A\), so \(a_0\) is the least element of \(A\).
Therefore \(\Omega_f(x_0)\) is well-ordered. Its least element is \(x_0\).

\item \emph{\(\Omega_f(x_0)\) satisfies the successor condition.}

Let \(y\in\Omega_f(x_0)\), and suppose that \(y\) is not the largest element of
\(\Omega_f(x_0)\). Then there exists \(z\in\Omega_f(x_0)\) such that \(y<z\). Choose
towers \(Y,Z\in\mathcal T\) such that \(y\in Y\) and \(z\in Z\). By the Tower Comparison
Lemma, \(Y\) and \(Z\) are comparable by the initial-segment relation. Since \(y<z\), one
of these two towers contains both \(y\) and \(z\). In that tower, \(y\) is not largest, and
its successor is \(f(y)\). Hence
\[
f(y)\in\Omega_f(x_0).
\]

Now let \(u\in\Omega_f(x_0)\) satisfy \(y<u\). By the same comparability argument, there
is a tower in \(\mathcal T\) containing both \(y\) and \(u\). In that tower, the least
element strictly above \(y\) is \(f(y)\). Therefore
\[
f(y)\leq u.
\]
Thus \(f(y)\) is the least element of \(\Omega_f(x_0)\) strictly above \(y\). Hence
\[
\succop_{\Omega_f(x_0)}(y)=f(y).
\]

\item \emph{\(\Omega_f(x_0)\) satisfies the limit condition.}

Let \(y\) be a limit element of \(\Omega_f(x_0)\). Choose \(Y\in\mathcal T\) such that
\(y\in Y\). We claim that \(y\) is a limit element of \(Y\).

First, \(y\) cannot be the least element of \(Y\), because then \(y=x_0\), which is also
the least element of \(\Omega_f(x_0)\), contrary to the assumption that \(y\) is a limit
element of \(\Omega_f(x_0)\). Next, suppose that \(y\) is a successor element of \(Y\),
say
\[
y=\succop_Y(u).
\]
We show that \(y\) would then be a successor element of \(\Omega_f(x_0)\), giving a
contradiction. Let \(v\in\Omega_f(x_0)\) satisfy
\[
u<v<y.
\]
Choose \(Z\in\mathcal T\) such that \(v\in Z\). By the Tower Comparison Lemma, \(Y\)
and \(Z\) are comparable by the initial-segment relation. If \(Z\) is an initial segment of
\(Y\), then \(v\in Y\), contradicting the fact that \(y\) is the successor of \(u\) in
\(Y\). If \(Y\) is an initial segment of \(Z\), then \(u,y\in Z\), and since \(v<y\), the
initial-segment property forces \(v\in Y\), again a contradiction. Thus there is no element
of \(\Omega_f(x_0)\) strictly between \(u\) and \(y\). Hence \(y\) is the successor of
\(u\) in \(\Omega_f(x_0)\), contradicting the assumption that \(y\) is a limit element of
\(\Omega_f(x_0)\). Therefore \(y\) is a limit element of \(Y\).

We now compare initial segments. If \(v\in\IS_{\Omega_f(x_0)}(y)\), choose
\(Z\in\mathcal T\) such that \(v\in Z\). By the Tower Comparison Lemma, \(Y\) and \(Z\)
are comparable by the initial-segment relation. If \(Z\) is an initial segment of \(Y\),
then \(v\in Y\). If \(Y\) is an initial segment of \(Z\), then \(y\in Y\subseteq Z\), and
\(v<y\) implies \(v\in Y\). Hence \(v\in\IS_Y(y)\). The reverse inclusion is immediate.
Therefore
\[
\IS_{\Omega_f(x_0)}(y)=\IS_Y(y).
\]
Since \(Y\) is a Bourbaki tower and \(y\) is a limit element of \(Y\),
\[
y=\lub_X(\IS_Y(y)).
\]
Consequently,
\[
y=\lub_X(\IS_{\Omega_f(x_0)}(y)).
\]

\item \emph{\(\Omega_f(x_0)\) is the largest Bourbaki tower.}

By the preceding three steps, \(\Omega_f(x_0)\) is a Bourbaki \(f\)-tower. Therefore
\[
\Omega_f(x_0)\in\mathcal T.
\]
It is clearly the largest element of \(\mathcal T\) under inclusion.

\item \emph{The least upper bound of \(\Omega_f(x_0)\) belongs to the tower.}

Since \(\Omega_f(x_0)\) is a nonempty well-ordered subset of \(X\), the hypothesis gives
a least upper bound in \(X\). Let
\[
\omega=\lub_X(\Omega_f(x_0)).
\]

Suppose first that \(\omega\notin\Omega_f(x_0)\). Since \(\omega\) is the least upper
bound of \(\Omega_f(x_0)\), every element of \(\Omega_f(x_0)\) is strictly below
\(\omega\). Moreover, \(\Omega_f(x_0)\) cannot have a largest element; otherwise its
largest element would already be the least upper bound of \(\Omega_f(x_0)\), contradicting
\(\omega\notin\Omega_f(x_0)\).

Adjoin \(\omega\) to \(\Omega_f(x_0)\) as a new largest element. The resulting set is
well-ordered. Since \(\Omega_f(x_0)\) has no largest element, the newly adjoined element
\(\omega\) is a limit element of the enlarged tower, and
\[
\omega=\lub_X(\Omega_f(x_0)).
\]
All earlier successor and limit conditions are unchanged. Hence the enlarged set is again
a Bourbaki \(f\)-tower based at \(x_0\), contradicting the maximality of
\(\Omega_f(x_0)\). Therefore
\[
\omega\in\Omega_f(x_0).
\]

\item \emph{The element \(\omega\) is a fixed point of \(f\).}

Suppose that
\[
f(\omega)>\omega.
\]
Since \(\omega\) is an upper bound of \(\Omega_f(x_0)\), every element of
\(\Omega_f(x_0)\) is below \(\omega\), and hence strictly below \(f(\omega)\). Therefore
\[
\Omega_f(x_0)\cup\{f(\omega)\}
\]
is obtained by adjoining \(f(\omega)\) as a new largest element.

In the enlarged set, \(f(\omega)\) is the successor of \(\omega\). The successor condition
therefore holds at \(\omega\), since its new successor is precisely \(f(\omega)\). The
newly added element \(f(\omega)\) is a successor element, so no new limit condition has to
be verified at that stage. All earlier successor and limit conditions are unchanged. Hence
the enlarged set is again a Bourbaki \(f\)-tower based at \(x_0\), contradicting the
maximality of \(\Omega_f(x_0)\).

Since \(f\) is progressive, we already have
\[
\omega\leq f(\omega).
\]
The strict inequality \(f(\omega)>\omega\) is impossible, and therefore
\[
f(\omega)=\omega.
\]
Thus \(f\) has a fixed point.
\end{enumerate}
\end{proof}

\begin{remark}
The usual chain-complete form of the Bourbaki--Witt theorem assumes that every chain in
\(X\) has a least upper bound. The theorem above requires only least upper bounds for
nonempty well-ordered subsets. Since every well-ordered subset is a chain, the
chain-complete hypothesis implies the hypothesis used here, but not conversely in general.
Thus the fixed-point conclusion is obtained under a weaker completeness assumption.
\end{remark}

\begin{corollary}[No strict progression principle]
Let \((X,\leq)\) be a nonempty partially ordered set such that every nonempty
well-ordered subset of \(X\) admits a least upper bound. Then there is no strictly
progressive map
\[
f:X\to X.
\]
\end{corollary}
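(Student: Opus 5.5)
The plan is a direct proof by contradiction using the Well-ordered Bourbaki--Witt normal form. Suppose, for contradiction, that \(f:X\to X\) is strictly progressive, so that \(x<f(x)\) for every \(x\in X\). Since \(x<f(x)\) means \(x\leq f(x)\) and \(x\neq f(x)\), the map \(f\) is in particular progressive in the sense of the definition. The poset \(X\) is nonempty and every nonempty well-ordered subset of \(X\) has a least upper bound. Hence all the hypotheses of the theorem hold for \(f\).

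Next, fix any \(x_0\in X\), which is possible because \(X\) is nonempty. The theorem then supplies the largest Bourbaki \(f\)-tower \(\Omega_f(x_0)\) and the element \(\omega=\lub_X(\Omega_f(x_0))\). It also guarantees that \(\omega\in\Omega_f(x_0)\) and that \(f(\omega)=\omega\). On the other hand, strict progressiveness applied at the point \(\omega\) gives \(\omega<f(\omega)\), and in particular \(f(\omega)\neq\omega\). This contradicts \(f(\omega)=\omega\), so no strictly progressive self-map of \(X\) can exist.

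I do not expect a genuine obstacle: all the substantive work (the Tower Comparison Lemma, well-orderedness of the union of towers, and the extension arguments for the limit and successor stages) is already contained in the theorem. The only points to state explicitly are that strict progressiveness implies progressiveness, so the theorem applies, and that nonemptiness of \(X\) is needed to choose the base point \(x_0\).
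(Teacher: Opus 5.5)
Your proof is correct and is the paper's argument: choose a base point \(x_0\in X\), apply the Well-ordered Bourbaki--Witt normal form to obtain a fixed point \(\omega\), and contradict \(\omega<f(\omega)\). You also state explicitly that a strictly progressive map is progressive, so the theorem applies; the paper leaves this point implicit.
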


\begin{proof}
Suppose, for contradiction, that \(f:X\to X\) is strictly progressive. Since \(X\) is
nonempty, choose \(x_0\in X\). By the preceding theorem applied to
\(f\) and \(x_0\), the map \(f\) has a fixed point \(\omega\in X\). Hence
\[
f(\omega)=\omega.
\]
This contradicts strict progressiveness, which gives
\[
\omega<f(\omega).
\]
Therefore no strictly progressive self-map \(X\to X\) exists.
\end{proof}

\section{The Bourbaki--Zorn Machine}

We now explain how a choice selector on strict upper cones can be converted into a
maximality argument through the tower normal form. The key idea is that, if a poset has
no maximal element, then choice selects a strictly larger element above each point. This
produces a strictly progressive self-map. The preceding fixed-point theorem then
forces such a map to have a fixed point, which is impossible.

\begin{definition}[Strict upper cone]
Let \((X,\leq)\) be a partially ordered set. For \(x\in X\), define the strict upper cone of
\(x\) by
\[
U_x=\{y\in X\mid x<y\}.
\]
\end{definition}

\begin{theorem}[Bourbaki--Zorn fixed-point obstruction]
Assume that every nonempty indexed family of nonempty sets admits a choice function.

Let \((X,\leq)\) be a nonempty partially ordered set such that every nonempty
well-ordered subset of \(X\) admits a least upper bound in \(X\). Then \(X\) has a
maximal element.
\end{theorem}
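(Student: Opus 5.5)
The plan is to argue by contradiction, using the No strict progression principle as the sole order-theoretic input; all that remains is to build a strictly progressive map from the failure of maximality.

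Suppose \(X\) has no maximal element. Then for every \(x\in X\) the strict upper cone \(U_x\) is nonempty. Otherwise every \(y\) with \(x\leq y\) would satisfy \(y=x\), and that is exactly the statement that \(x\) is maximal. So \((U_x)_{x\in X}\) is an indexed family of nonempty sets. It is also a nonempty family, because \(X\) is nonempty.

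By the assumed choice principle there is a choice function \(c\) with \(c(x)\in U_x\) for all \(x\in X\). Define \(f:X\to X\) by \(f(x)=c(x)\). Since \(f(x)\in U_x\), we get \(x<f(x)\) for every \(x\), so \(f\) is strictly progressive.

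The hypotheses of the theorem are exactly those of the No strict progression principle: \(X\) is nonempty, and every nonempty well-ordered subset has a least upper bound. That corollary says no strictly progressive self-map of \(X\) exists, which contradicts the existence of \(f\). Hence \(X\) has a maximal element. Equivalently, one can apply the Well-ordered Bourbaki--Witt normal form directly to \(f\) and a chosen base point \(x_0\). This gives a fixed point \(\omega=f(\omega)\), which is incompatible with \(\omega<f(\omega)\).

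None of these steps is deep, since the tower machinery has already done the work. The only point needing care is the translation between ``\(X\) has no maximal element'' and ``every \(U_x\) is nonempty.'' That is precisely where the choice hypothesis must be applied, namely to the family indexed by \(X\), which is why the nonemptiness of \(X\) is needed.
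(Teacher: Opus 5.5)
Your proof is correct and matches the paper's argument exactly: you assume there is no maximal element, apply choice to the nonempty cones $(U_x)_{x\in X}$ to get a strictly progressive $f$, and contradict the No Strict Progression Principle. You also check explicitly that $U_x\neq\varnothing$ is equivalent to non-maximality and that the indexed family is nonempty, which the paper leaves implicit. One small correction to your closing remark: nonemptiness of $X$ is needed mainly because the corollary and the conclusion both fail for $X=\varnothing$, where the empty map is vacuously strictly progressive, not just to make the choice principle applicable.
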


\begin{proof}
Suppose, for contradiction, that \(X\) has no maximal element. Then for every \(x\in X\),
the strict upper cone
\[
U_x=\{y\in X\mid x<y\}
\]
is nonempty.

By the choice principle applied to the indexed family \((U_x)_{x\in X}\), choose a map
\[
s:X\to X
\]
such that
\[
s(x)\in U_x
\qquad\text{for every }x\in X.
\]
Define \(f:X\to X\) by
\[
f(x)=s(x).
\]
Then
\[
x<f(x)
\qquad\text{for every }x\in X,
\]
so \(f\) is strictly progressive.

This contradicts the No Strict Progression Principle, which says that no strictly
progressive self-map can exist on \(X\) under the stated least-upper-bound hypothesis.
Therefore \(X\) must contain a maximal element.
\end{proof}

\begin{remark}[Bourbaki--Zorn normal form]
The preceding proof factors through the following mechanism:
\[
\begin{aligned}
\text{choice selector}
&\Longrightarrow
\text{strictly progressive self-map} \\
&\Longrightarrow
\text{Bourbaki tower} \\
&\Longrightarrow
\text{fixed-point contradiction}.
\end{aligned}
\]
Indeed, if \(X\) has no maximal element, choice produces a map \(f:X\to X\) satisfying
\(x<f(x)\) for every \(x\in X\). The preceding fixed-point theorem then produces a
largest Bourbaki tower \(\Omega_f(x_0)\), whose least upper bound \(\omega\) belongs to
the tower and satisfies
\[
f(\omega)=\omega.
\]
This contradicts strict progressiveness. Thus maximality is obtained as a fixed-point
obstruction to strict progression.
\end{remark}

\section{Relation with Zorn's Lemma}

The fixed-point theorem proved above is weaker in its completeness hypothesis than the
usual chain-complete form of the Bourbaki--Witt theorem: it requires least upper bounds
only for nonempty well-ordered subsets. The resulting maximality principle, however,
should be compared with Zorn's Lemma more carefully.

The usual form of Zorn's Lemma assumes that every chain \(C\subseteq X\) has an upper
bound:
\[
\forall C\subseteq X,\quad
C\text{ a chain}
\Longrightarrow
\exists u\in X\text{ such that }c\leq u\text{ for all }c\in C.
\]
By contrast, the maximality principle obtained here assumes that every nonempty
well-ordered subset has a least upper bound:
\[
\forall W\subseteq X,\quad
W\text{ nonempty and well-ordered}
\Longrightarrow
\exists \lub_X(W)\in X.
\]
Thus the present maximality principle is not a direct replacement for the usual
chain-upper-bound form of Zorn's Lemma. It is a least-upper-bound version adapted to the
Bourbaki tower construction.

The reason is that the comparison argument requires canonical behavior at limit stages.
If \(Y\) and \(Y'\) are two Bourbaki towers and their preceding initial segments agree,
\[
\IS_Y(y)=\IS_{Y'}(y'),
\]
then the limit-stage condition forces
\[
y=\lub_X(\IS_Y(y))
 =\lub_X(\IS_{Y'}(y'))
 =y'.
\]
This is the canonical step that prevents branching of towers.

If one assumes only that upper bounds exist, then for the same initial segment one may
only know that there is some \(u\in X\) such that
\[
z\leq u
\qquad\text{for all }z\in \IS_Y(y).
\]
Such an upper bound need not be unique, and it need not be least. Two tower constructions
could therefore choose different upper bounds at the same limit stage. To recover the
classical chain-upper-bound form of Zorn's Lemma by the same tower method, one would
need additional choice data selecting upper bounds coherently at limit stages.

The underlying mechanism is nevertheless parallel:
\[
\begin{aligned}
\text{choice}
&\Longrightarrow
\text{successor rule }x\mapsto f(x)>x,\\
&\Longrightarrow
\text{transfinite well-ordered tower},\\
&\Longrightarrow
\text{fixed point }f(\omega)=\omega,\\
&\Longrightarrow
\text{contradiction to strict progressiveness}.
\end{aligned}
\]
In this sense, the paper does not replace the classical Zorn Lemma. Rather, it isolates
the Bourbaki--Witt fixed-point mechanism underlying a least-upper-bound version of
Zorn-type maximality arguments.

\section{Conclusion}

We have isolated a normal-form mechanism underlying Bourbaki--Witt fixed-point arguments
and least-upper-bound versions of Zorn-type maximality principles. The central object is
the Bourbaki tower: a well-ordered trajectory generated by a progressive self-map. Its
successor stages are determined by the map, and its limit stages are determined by least
upper bounds of earlier stages.

The main fixed-point theorem shows that least upper bounds for nonempty well-ordered
subsets are sufficient to force a fixed point for every progressive self-map. This is a
weaker completeness hypothesis than the usual chain-complete formulation of the
Bourbaki--Witt theorem. The proof constructs a largest Bourbaki tower and shows that the
least upper bound of this tower belongs to the tower itself and is fixed by the map.

This yields the No Strict Progression Principle: no strictly progressive self-map can exist
under the stated least-upper-bound hypothesis. Combining this obstruction with a choice
selector on strict upper cones gives a concise Zorn-type maximality principle. The
resulting architecture is:
\[
\begin{aligned}
\text{choice}
&\Longrightarrow
\text{strict progression} \\
&\Longrightarrow
\text{Bourbaki tower} \\
&\Longrightarrow
\text{fixed point} \\
&\Longrightarrow
\text{contradiction}.
\end{aligned}
\]

The contribution of this viewpoint is methodological. It clarifies how Bourbaki--Witt
fixed points, strict progression obstructions, and least-upper-bound maximality principles
are connected through a common transfinite architecture.


\begin{thebibliography}{9}

\bibitem{Bourbaki}
Nicolas Bourbaki,
\emph{Theory of Sets},
Elements of Mathematics,
Springer, 2004.

\bibitem{DaveyPriestley}
B. A. Davey and H. A. Priestley,
\emph{Introduction to Lattices and Order},
2nd ed.,
Cambridge University Press, 2002.

\bibitem{Halmos}
Paul R. Halmos,
\emph{Naive Set Theory},
Undergraduate Texts in Mathematics,
Springer, 1974.

\bibitem{Jech}
Thomas Jech,
\emph{Set Theory},
3rd millennium ed.,
Springer Monographs in Mathematics,
Springer, 2003.

\bibitem{Kelley}
John L. Kelley,
\emph{General Topology},
Graduate Texts in Mathematics, Vol. 27,
Springer, 1975.

\bibitem{Kunen}
Kenneth Kunen,
\emph{Set Theory},
Studies in Logic, Vol. 34,
College Publications, 2011.

\bibitem{MacLane}
Saunders Mac Lane,
\emph{Categories for the Working Mathematician},
2nd ed.,
Graduate Texts in Mathematics, Vol. 5,
Springer, 1998.

\end{thebibliography}
\end{document}